\newtheorem{thm}{Theorem}[section]
\newtheorem{prop}[thm]{Proposition}
\theoremstyle{definition}
\theoremstyle{remark}
\numberwithin{equation}{section}
\newcommand{\R}{\mathbb R}
\def\Lc{\mathcal{L}}
\def\d{\partial}
\def\0{\varnothing}
\def\le{\leqslant}
\def\ge{\geqslant}
\newcommand\pd[2]{\frac{\partial #1}{\partial #2}}
\def\R{\mathbb{R}}
\def\0{\varnothing}
\begin{document}

\title[Number of vertices in Gelfand--Zetlin polytopes]
{Number of vertices in Gelfand--Zetlin polytopes}
\author{Pavel Gusev}
\author{Valentina Kiritchenko}
\author{Vladlen Timorin}

\address[PG, VK and VT]{Faculty of Mathematics and Laboratory of Algebraic Geometry\\
National Research University Higher School of Economics\\
7 Vavilova St 117312 Moscow, Russia}

\address[VK]
{RAS Institute for Information Transmission Problems\\
Bolshoy Karetny Pereulok 19, 127994 Moscow, Russia}

\address[VT]
{Independent University of Moscow\\
Bolshoy Vlasyevskiy Pereulok 11, 119002 Moscow, Russia}

\email{vtimorin@hse.ru}

\begin{abstract}
We discuss the problem of counting vertices in Gelfand--Zetlin polytopes.
Namely, we deduce a partial differential equation with constant coefficients
on the exponential generating function for these numbers.
For some particular classes of Gelfand-Zetlin polytopes, the number
of vertices can be given by explicit formulas.
\end{abstract}
\maketitle

\section{Introduction and statement of results}

Gelfand--Zetlin polytopes play an important role in representation theory \cite{GZ,O}
and in algebraic geometry (see \cite{KST}).
Let $\lambda_1\le\dots\le\lambda_n$ be a non-decreasing finite sequence
of integers, i.e. an integer partition.
The corresponding Gelfand--Zetlin polytope is a convex polytope
in $\R^{\frac{n(n-1)}2}$ defined by an explicit set of linear inequalities
depending on $\lambda_i$.
It will be convenient to label the coordinates $u_{i,j}$ in $\R^{\frac{n(n-1)}2}$
by pairs of integers $(i,j)$, where $i$ runs from $1$ to $n-1$, and
$j$ runs from $1$ to $n-i$.
The inequalities defining the Gelfand--Zetlin polytope can be visualized
by the following triangular table.
$$
\begin{array}{ccccccccc}
\lambda_1&       & \lambda_2    &         &\lambda_3 & &\ldots & &\lambda_n   \\
    &u_{1,1}&         &u_{1,2}  &         & \ldots  &         &u_{1,n-1}&       \\
    &       &u_{2,1} &         &\ldots   &         &u_{2,n-2}&         &       \\
    &       &         & \ddots  &\ldots   &   &         &         &       \\
    &       &         &u_{n-2,1}&         &u_{n-2,2}&         &         &       \\
    &       &         &         &u_{n-1,1}&         &         &         &       \\
\end{array}
\eqno{(GZ)}
$$
where every triple of numbers $a$, $b$, $c$ that appear in the table as
vertices of the triangle
 $$
 \begin{array}{ccc}
  a &  &b \\
   & c &
 \end{array}
 $$
are subject to the inequalities $a\le c\le b$.

In this paper, we discuss generating functions for the number
of vertices in Gelfand--Zetlin polytopes. We will use the
multiplicative notation for partitions, e.g. $1^{i_1} 2^{i_2} 3^{i_3}$
will denote the partition consisting of $i_1$ copies of $1$,
$i_2$ copies of $2$, and $i_3$ copies of $3$.
Given a partition $p$, we write $GZ(p)$ for the corresponding
Gelfand--Zetlin polytope, and $V(p)$ for the number of vertices in $GZ(p)$.

Fix a positive integer $k$, and consider all partitions of the form
$1^{i_1}\dots k^{i_k}$, where a priori some of the powers $i_j$ may be
zero.
We let $E_k$ denote the exponential generating function for the
numbers $V(1^{i_1}\dots k^{i_k})$, i.e. the formal power series
$$
E_k=\sum_{i_1,\dots,i_k\ge 0}V(1^{i_1}\dots k^{i_k})\frac{z_1^{i_1}}{i_1!}
\dots\frac{z_k^{i_k}}{i_k!}.
$$
Our first result is a partial differential equation on the function $E_k$:

\begin{thm}
\label{t:pde}
  The formal power series $E_k$ satisfies the following partial differential
  equation with constant coefficients:
  $$
  \left(\frac{\d^k}{\d z_1\dots\d z_k}-
  \left(\pd{}{z_1}+\pd{}{z_2}\right)\dots\left(\pd{}{z_{k-1}}+\pd{}{z_k}\right)\right)
  E_k=0.
  $$
\end{thm}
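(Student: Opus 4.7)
The plan is to convert the PDE into a finite combinatorial identity on vertex numbers, and then to prove that identity by an explicit bijection between vertex sets.

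\textbf{Step 1.}
Expanding both sides of the PDE and extracting the coefficient of $\prod_{j=1}^k z_j^{i_j}/i_j!$ in $E_k$, the operator $\partial^k/(\partial z_1\cdots\partial z_k)$ contributes $V(1^{i_1+1}\cdots k^{i_k+1})$, while $(\partial/\partial z_1+\partial/\partial z_2)\cdots(\partial/\partial z_{k-1}+\partial/\partial z_k)$ expands as a sum over sequences $\epsilon=(\epsilon_1,\ldots,\epsilon_{k-1})$ with $\epsilon_\ell\in\{\ell,\ell+1\}$, contributing $V(1^{i_1+m_1}\cdots k^{i_k+m_k})$ with $m_j=|\{\ell:\epsilon_\ell=j\}|$. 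Encoding $\epsilon$ by the binary sequence $b=(b_1,\ldots,b_{k-1})\in\{0,1\}^{k-1}$ via $b_\ell=\epsilon_\ell-\ell$ and writing $n_j=i_j+1$, the PDE reduces to the identity
\[
V(1^{n_1}\cdots k^{n_k})=\sum_{b\in\{0,1\}^{k-1}}V(\lambda^b),
\]
where $\lambda^b$ is the partition with multiplicity $n_j+b_{j-1}-b_j$ of the value $j$ (with the boundary conventions $b_0=0$ and $b_k=1$). In particular, $\lambda^b$ has exactly one fewer part than $\lambda=(1^{n_1}\cdots k^{n_k})$.

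\textbf{Step 2.}
I would then use the standard combinatorial model: vertices of $GZ(\lambda)$ correspond bijectively to triangular tables $(v_{i,j})$ with $v_{0,j}=\lambda_j$ and each interior entry $v_{i,j}\in\{v_{i-1,j},v_{i-1,j+1}\}$. Each such table is a point of $GZ(\lambda)$ at which $n(n-1)/2$ linearly independent defining inequalities (one per interior node) are tight, hence a vertex; and distinct tables give distinct vertices since the table records the coordinates.

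\textbf{Step 3.}
The main task is to construct an explicit bijection between the vertex set of $GZ(\lambda)$ and the disjoint union, over $b\in\{0,1\}^{k-1}$, of the vertex sets of $GZ(\lambda^b)$. Let $p_j=n_1+\cdots+n_j$ mark the boundary in the top row between the blocks of $j$'s and $(j+1)$'s. In the forward direction, for a vertex $v$ of $GZ(\lambda)$ read off $b_j\in\{0,1\}$ by $v_{1,p_j}=j+b_j$, and contract the triangle along the $k-1$ monotone lattice paths that carry the chosen boundary values downward. This contraction yields a smaller triangular table whose top row realizes $\lambda^b$, i.e., a vertex of $GZ(\lambda^b)$. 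The inverse reinserts $k-1$ columns along prescribed monotone paths carrying the values dictated by $b$.

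\textbf{Main obstacle.}
The delicate point is in Step~3: handling the $k-1$ boundary paths simultaneously, verifying that the contracted top row has multiplicities exactly $n_j+b_{j-1}-b_j$, and checking that the forward and reverse constructions are mutually inverse. The subtle feature is that for an interior value $j$ the multiplicity shift $b_{j-1}-b_j$ can be $-1$, $0$, or $+1$, reflecting two independent contributions from the boundary paths on either side of the $j$-block; this must be encoded carefully so that no vertex is double-counted or missed.
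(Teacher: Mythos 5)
Your overall strategy is the same as the paper's: your Step~1 correctly reduces the PDE to the recurrence $V(1^{n_1}\cdots k^{n_k})=\sum_{b\in\{0,1\}^{k-1}}V(\lambda^b)$, which is exactly the relation the paper establishes in Proposition~2.1 (there the $2^{k-1}$ choices of $b$ appear as the vertices of the cube $1\le u_1\le 2\le u_2\le\dots\le k$, and the coefficients $c_\alpha$ of $(x_1+x_2)\cdots(x_{k-1}+x_k)$ just collect the $b$'s producing the same $\lambda^b$). But the proof is not complete: Step~3, which you yourself flag as the main obstacle, is where the entire content of the recurrence lives, and the plan you sketch for it (``contract the triangle along $k-1$ monotone lattice paths'') is both vaguer and substantially harder than what is needed. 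The observation you are missing is that everything happens in the first row of the pattern. Write $p_j=n_1+\cdots+n_j$. For every index $l$ with $\lambda_l=\lambda_{l+1}$ the inequalities $\lambda_l\le v_{1,l}\le\lambda_{l+1}$ already force $v_{1,l}$ at \emph{every} point of the polytope, so the only free first-row entries are $v_{1,p_1},\dots,v_{1,p_{k-1}}$, and at a vertex $v_{1,p_j}\in\{j,j+1\}$, which defines $b_j$. Once $b$ is read off, the whole first row is determined and is a weakly increasing sequence containing exactly $n_j+b_{j-1}-b_j$ entries equal to $j$; deleting row $0$ then exhibits the remaining rows verbatim as a vertex table of $GZ(\lambda^b)$, and prepending the determined first row is the inverse. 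No global contraction paths are needed, disjointness over $b$ is automatic, and the multiplicity bookkeeping you worry about is immediate. This is precisely the paper's argument, phrased there geometrically: the projection to the cube sends vertices to vertices, and the fiber over the cube vertex labelled by $b$ is (combinatorially) $GZ(\lambda^b)$.

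A second, smaller gap is in Step~2. The direction you argue (a table in which each entry equals an upper neighbour is a vertex, via the triangular system of tight equalities) is fine, but the converse --- every vertex is such a table --- is asserted, not proved, and it is exactly what makes the assignment $v\mapsto b$ well defined. It does hold, but needs an argument: if $v_{i-1,j}<v_{i,j}<v_{i-1,j+1}$, let $S$ be the connected component of $(i,j)$ in the graph of tight equalities; using that each row of a pattern is weakly increasing, one checks that $S$ cannot reach row $i-1$ (two equal entries of row $i$ squeeze the row-$(i-1)$ entries between them to the same value, contradicting strictness), hence $S$ meets no constants and the whole component can be shifted by $\pm\eps$ within the polytope, so $v$ is not a vertex. (The paper's ``observe that all vertices project to vertices of the cube'' conceals the same fact, so you are in good company, but a written proof should include it.)
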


E.g. we have
$$
E_1(z_1)=e^{z_1},\quad E_2(z_1,z_2)=e^{z_1+z_2} I_0\left(2 \sqrt{z_1 z_2}\right),
$$
where $I_0$ is the modified Bessel function of the first kind with parameter $0$.
This function can be defined e.g. by its power expansion
$$
I_0(t)=\sum_{n=0}^\infty \frac{t^n}{n!^2}.
$$

It is also useful to consider ordinary generating functions for
the numbers $V(1^{i_1}\dots k^{i_k})$:
$$
G_k(y_1,\dots,y_k)=\sum_{i_1,\dots,i_k\ge 0}V(1^{i_1}\dots k^{i_k})
y_1^{i_1}\dots y_k^{i_k}.
$$
We will also deduce equations on $G_k$.
These will be difference equations rather than differential equations.
For any power series $f$ in the variables $y_1$, $\dots$, $y_k$,
define the action of the divided difference operator $\Delta_i$ on $f$ as
$$
\Delta_i(f)=\frac{f-f|_{y_i=0}}{y_i}.
$$

\begin{thm}
\label{t:dde}
The ordinary generating function $G_k$ satisfies the following equation
$$
\left(\Delta_1\dots\Delta_k-(\Delta_1+\Delta_2)\dots
(\Delta_{k-1}+\Delta_k)\right)G_k=0.
$$
\end{thm}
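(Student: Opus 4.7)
The plan is to deduce Theorem~\ref{t:dde} directly from Theorem~\ref{t:pde} by observing that the two statements encode the same recurrence among the numbers $V(1^{i_1}\cdots k^{i_k})$.

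First I unpack the action of $\Delta_i$. For any formal power series $f=\sum c_{j_1,\dots,j_k}y_1^{j_1}\cdots y_k^{j_k}$, removing the $y_i$-constant part of $f$ and dividing by $y_i$ simply shifts the $i$-th index down by one, so the coefficient of $y_1^{j_1}\cdots y_k^{j_k}$ in $\Delta_i f$ equals $c_{j_1,\dots,j_i+1,\dots,j_k}$. This is exactly the effect that $\pd{}{z_i}$ has on the corresponding coefficient of the Taylor expansion of $E_k$, viewed through the identification $\prod_j z_j^{a_j}/a_j! \leftrightarrow \prod_j y_j^{a_j}$. In particular, the operators $\Delta_1,\dots,\Delta_k$ commute pairwise, so products of them and sums of such products can be expanded formally in the usual way.

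Next I expand both sides of the conjectured identity. Extracting the coefficient of $y_1^{j_1}\cdots y_k^{j_k}$ in $\Delta_1\cdots\Delta_k G_k$ and in $(\Delta_1+\Delta_2)\cdots(\Delta_{k-1}+\Delta_k)G_k$ yields integer linear combinations of numbers of the form $V(1^{j_1+a_1}\cdots k^{j_k+a_k})$, where the nonnegative shifts $a_i$ depend only on which $\Delta_i$'s appear in each term of the expansion. Exactly the same linear combinations, with exactly the same shifts, arise when one extracts the coefficient of $z_1^{j_1}/j_1!\cdots z_k^{j_k}/j_k!$ in the corresponding differential operators applied to $E_k$, because the shift rules for the two families of operators on their respective monomial bases coincide term by term.

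Consequently, Theorems~\ref{t:pde} and~\ref{t:dde} are coefficient-by-coefficient equivalent: each amounts to the assertion that a particular family of integer linear combinations of the numbers $V(1^{i_1}\cdots k^{i_k})$ vanishes identically. Since Theorem~\ref{t:pde} has already been established, Theorem~\ref{t:dde} follows at once. The only points requiring verification are the commutativity of the $\Delta_i$'s and the explicit coefficient formula for their compositions; both are routine computations on formal power series, so no real obstacle arises.
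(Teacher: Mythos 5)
Your proof is correct, but it takes a different route from the paper. The paper proves Theorem~\ref{t:dde} directly, in parallel with its proof of Theorem~\ref{t:pde}: starting from the recursion $A^\infty(x^i)=\sum_\alpha c_\alpha A^\infty(x^{i-1+\alpha})$ of Proposition~\ref{p:Ainf}, it writes the truncated series $G_k^*=\sum_{i>0}V(1^{i_1}\dots k^{i_k})y^i$ in the form $\sum_\alpha c_\alpha y^{1-\alpha}\sum_{j\ge\alpha}A^\infty(x^j)y^j$ and then applies $\Delta_1\dots\Delta_k$, using the identities $\Delta_1\dots\Delta_k(G_k^*)=\Delta_1\dots\Delta_k(G_k)$ and $\Delta^\alpha(G_k)=y^{-\alpha}\sum_{j\ge\alpha}A^\infty(x^j)y^j$. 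You instead deduce Theorem~\ref{t:dde} from Theorem~\ref{t:pde} by observing that $\Delta_i$ acts on the coefficients of an ordinary generating function by the same index shift $c_j\mapsto c_{j+e_i}$ that $\partial/\partial z_i$ performs on the coefficients $c_j$ of $z^j/j!$ in an exponential one, so the two theorems assert the vanishing of the identical linear combinations $V_{j+(1,\dots,1)}-\sum_\alpha c_\alpha V_{j+\alpha}$. This is precisely the ``Laplace transform'' deduction that the authors mention as possible in principle but decline to carry out in favor of a direct proof. Your shift-of-coefficients argument is clean and complete (the commutativity of the $\Delta_i$ and the formula for the coefficient of $y^j$ in $\Delta^\alpha f$, including repeated indices, are indeed routine to verify), and it has the virtue of making transparent that both theorems encode one and the same recurrence; the cost is that it is not self-contained, relying on Theorem~\ref{t:pde}, whereas the paper's argument exhibits the difference-operator identity $\Delta^\alpha(G_k)=y^{-\alpha}\sum_{j\ge\alpha}A^\infty(x^j)y^j$ explicitly, which is then reused in the explicit computations of $G_1$, $G_2$, $G_3$.
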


It is known that the ordinary generating functions $G_k$ can be
obtained from exponential generating functions $E_k$ by the Laplace transform.
Thus Theorem \ref{t:dde} can in principle be deduced from Theorem
\ref{t:pde} and the properties of the Laplace transform.
However, we will give a direct proof.

For $k=1$, 2 and 3, the generating functions $G_k$ can be computed explicitly.
It is easy to see that
$$
G_1(y_1)=\frac 1{1-y_1},\quad G_2(y_1,y_2)=\frac 1{1-y_1-y_2}.
$$
We will prove the following theorem:

\begin{thm}
  \label{t:G3}
  The function $G_3(x,y,z)$ is equal to
$$
\frac{2xz-y(1-x-z)-y\sqrt{1-2(x+z)+(x-z)^2}}
{2(1-x-z)((x+y)(y+z)-y)}.
$$
\end{thm}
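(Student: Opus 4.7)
The plan is to specialize the difference equation of Theorem \ref{t:dde} to $k=3$, convert it into a functional equation for $G_3(x,y,z)$, and solve via the kernel method for algebraic generating functions. I would begin by identifying the three coordinate-plane restrictions of $G_3$. Directly from the definition, $G_3(x,y,0)=G_2(x,y)=(1-x-y)^{-1}$. By translation-invariance of the Gelfand--Zetlin construction (shifting every $\lambda_i$ by a common constant translates the polytope and preserves its vertex count), $G_3(0,y,z)=G_2(y,z)=(1-y-z)^{-1}$. For $G_3(x,0,z)$, the componentwise affine map $u_{i,j}\mapsto(u_{i,j}+1)/2$ sends $GZ(1^a 3^c)$ bijectively onto $GZ(1^a 2^c)$, so that $V(1^a 3^c)=V(1^a 2^c)=\binom{a+c}{a}$ and hence $G_3(x,0,z)=(1-x-z)^{-1}$.

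Reading off coefficients from Theorem \ref{t:dde} at $k=3$ gives the recursion
\[
v_{a,b,c}=v_{a,b,c-1}+v_{a,b-1,c}+v_{a-1,b+1,c-1}+v_{a-1,b,c},\qquad a,b,c\geq 1,
\]
where $v_{a,b,c}=V(1^a 2^b 3^c)$. Multiplying by $x^a y^b z^c$, summing over $a,b,c\geq 1$, and expressing each of the four resulting sums in terms of $G_3$ and its boundary restrictions via inclusion--exclusion, one obtains a functional equation of the form
\[
\bigl((x+y)(y+z)-y\bigr)\,G_3(x,y,z)\;=\;R(x,y,z)+xyz\,W(x,z),
\]
where $R$ is a completely explicit rational function (built from the boundary data above) and $W(x,z):=[y^1]\,G_3(x,y,z)$ is an a priori unknown power series in $x$ and $z$.

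To determine $W$ and close the system I would apply the kernel method. Viewed as a quadratic in $y$, the kernel factors as $K(x,y,z)=(y-y_+)(y-y_-)$ with roots
\[
y_\pm(x,z)=\tfrac{1}{2}\bigl(1-x-z\pm\sqrt{(1-x-z)^2-4xz}\bigr),
\]
satisfying $y_++y_-=1-x-z$ and $y_+ y_-=xz$. Only the minus branch $y_-(x,z)$ is a formal power series vanishing at the origin, so substituting $y=y_-(x,z)$ into the functional equation makes the left-hand side vanish and yields $W(x,z)=-R(x,y_-(x,z),z)/(x\,y_-(x,z)\,z)$. Reinserting this $W$ into the functional equation, dividing by $K$, and rationalizing the denominator using the above identities produces the closed form, with the radical $\sqrt{1-2(x+z)+(x-z)^2}=\sqrt{(1-x-z)^2-4xz}$ in the numerator inherited from $y_-$. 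That the resulting rational expression is a genuine power series follows because $y-y_+$ has nonzero constant term $-1$ and is therefore invertible in the power series ring.

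The main obstacle is the extensive but routine bookkeeping: translating the coefficient recursion into the functional equation requires tracking seven inclusion--exclusion boundary contributions (three faces, three edges, one vertex) with correct signs, and the final simplification in the kernel-method step requires combining $R(x,y,z)$ and $(y/y_-)\,R(x,y_-,z)$ over a common denominator and cancelling the correct powers of $y_-$ and $K$ to recover the compact form stated in the theorem. The conceptually crucial step, and the one that introduces a square root into an otherwise rational setup, is the substitution $y=y_-$, which is forced by the requirement that $G_3$ and $W$ be formal power series.
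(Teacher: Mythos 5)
Your proposal is correct and follows essentially the same route as the paper: both derive the $k=3$ specialization of Theorem \ref{t:dde} into a functional equation whose only unknown is the coefficient of $y^1$ in $G_3$, and both resolve that unknown by forcing the solution to be a formal power series at the small root $y_-=\mu$ of the kernel $y-(x+y)(y+z)$ (the paper phrases this as divisibility of the numerator by $y-\mu$ after partial fractions, which is the kernel method in different clothing). The remaining bookkeeping and the boundary data $G_3(x,0,z)=G_3(x,y,0)|_{y\to z}=(1-x-z)^{-1}$ match the paper's computation.
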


The numbers $V_{k,\ell,m}=V(1^k2^\ell 3^m)$ can be alternatively expressed
as coefficients of certain polynomials:

\begin{thm}
  \label{t:coeff}
  The number $V_{k,\ell,m}$ for $k>0$, $\ell>0$, $m>0$ is equal to the coefficient with
  $x^kz^m$ in the polynomial
  $$
  \frac{1-xz}{1+xz}\left((1+x)^{k+\ell+m}(1+z)^{k+\ell+m}-(x+z)^{k+\ell+m}\right).
  $$
\end{thm}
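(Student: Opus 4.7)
The plan is to exploit the closed-form expression for $G_3$ given by Theorem~\ref{t:G3}. First I would simplify the formula drastically. Note that the quadratic $(x+y)(y+z)-y = y^2-(1-x-z)y+xz$ in the denominator has roots
$y_\pm = (1-x-z\pm\sqrt{1-2(x+z)+(x-z)^2})/2$,
satisfying $y_+y_-=xz$, and the surd appearing in the numerator is exactly the discriminant $y_+-y_-$. A short manipulation (using $y_++y_-=1-x-z$) shows that the numerator equals $2y_+(y_--y)$, so after cancellation
$$G_3(x,y,z)=\frac{1}{(1-x-z)(1-y/y_+)}.$$
Expanding the geometric series in $y$ and reading off the coefficient of $y^\ell$ yields
$$F_\ell(x,z):=\sum_{k,m\ge 0} V_{k,\ell,m}\,x^k z^m=\frac{1}{(1-x-z)\,y_+^\ell}.$$

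The heart of the argument is then a rationalizing substitution
$$x=\frac{\tilde x}{W},\qquad z=\frac{\tilde z}{W},\qquad W:=(1+\tilde x)(1+\tilde z),$$
which is a local biholomorphism near the origin. Direct computation gives
$$y_+=\frac{1}{W},\qquad 1-x-z=\frac{1+\tilde x\tilde z}{W},\qquad F_\ell=\frac{W^{\ell+1}}{1+\tilde x\tilde z},$$
and a Jacobian $\det(\partial(x,z)/\partial(\tilde x,\tilde z))=(1-\tilde x\tilde z)/W^3$. Applying Cauchy's coefficient integral and changing variables, all the powers of $W$ combine to $W^N=(1+\tilde x)^N(1+\tilde z)^N$ with $N=k+\ell+m$, leaving
$$V_{k,\ell,m}=\frac{1}{(2\pi i)^2}\oint\!\oint\frac{F_\ell(x,z)}{x^{k+1}z^{m+1}}\,dx\,dz=[\tilde x^k\tilde z^m]\,\frac{(1-\tilde x\tilde z)(1+\tilde x)^N(1+\tilde z)^N}{1+\tilde x\tilde z}.$$

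Finally I would reconcile this with the polynomial in the statement. The extra term $\frac{1-xz}{1+xz}(x+z)^N$ contributes $0$ to $[x^k z^m]$ when $k,\ell,m>0$: each monomial in its expansion has total degree at least $N$, whereas $k+m=N-\ell<N$. Subtracting $(x+z)^N$ inside the parentheses of the theorem therefore does not change the coefficient, which yields the desired formula. The main conceptual obstacle is finding the rationalizing substitution $x=\tilde x/((1+\tilde x)(1+\tilde z))$; once that is in hand, the remainder is a bookkeeping of the Jacobian and of the powers of $W$.
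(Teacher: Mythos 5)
Your argument is correct, and it is genuinely different from the paper's. You derive Theorem \ref{t:coeff} as a consequence of the closed form for $G_3$ in Theorem \ref{t:G3}: the simplification $G_3=\bigl((1-x-z)(1-y/\lambda)\bigr)^{-1}$ with $\lambda=y_+$ is exactly the intermediate form $G=\frac{1}{1-x-z}\frac{\lambda}{\lambda-y}$ appearing in the paper's derivation of Theorem \ref{t:G3}, your substitution $x=\tilde x/W$, $z=\tilde z/W$, $W=(1+\tilde x)(1+\tilde z)$ does satisfy $\lambda=1/W$, $1-x-z=(1+\tilde x\tilde z)/W$ and has Jacobian $(1-\tilde x\tilde z)/W^3$, and the bookkeeping of powers of $W$ in the double Cauchy integral checks out; the final reduction to the stated polynomial (dropping $(x+z)^{k+\ell+m}$ because its contribution to $[x^kz^m]$ vanishes when $\ell>0$) is precisely the remark the paper makes right after the theorem. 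The paper, by contrast, proves Theorem \ref{t:coeff} independently of Theorem \ref{t:G3}: it organizes the numbers $V_{k,\ell,m}$ by total degree $s=k+\ell+m$ into polynomials $g_s$ satisfying a recurrence with a truncation operator, antisymmetrizes to $h_s=g_s-(xz)^sg_s(z^{-1},x^{-1})$ to eliminate the truncation, and solves the resulting linear recurrence in closed form. The paper's route is elementary and self-contained and in addition exhibits the skew-symmetric tables $\tilde T^s$, while yours is shorter but leans on Theorem \ref{t:G3} and on the validity of the change of variables in the two-variable coefficient-extraction integral. That last step is the only place where you owe a sentence of justification: the substitution is a biholomorphism near the origin preserving the coordinate hyperplanes, so the preimage of a small torus is homologous to a small torus in the complement of $\{xz=0\}$ (equivalently, your computation is an instance of the Lagrange--Good multivariate inversion formula), after which the identity is rigorous.
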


Set $s=k+\ell+m$.
Note that, since the term $(x+z)^s$ is homogeneous of degree $s$,
the number $V_{k,\ell,m}$, where $k,\ell, m>0$, is also equal to the coefficient with
$x^kz^m$ in the power series
$$
\frac{(1-xz)(1+x)^s(1+z)^s}{1+xz}.
$$
This implies the following explicit formula for the numbers $V_{k,\ell,m}$
($k,\ell,m >0$):
$$
V_{k,\ell,m}=\binom sk\binom sm+2\sum_{i=1}^k(-1)^i\binom s{k-i}\binom s{m-i}.
$$
Note that the sum $\sum_{i=1}^k(-1)^i\binom s{k-i}\binom s{m-i}$
can be expressed as the value of the generalized hypergeometric function $_3F_2$,
namely, it is equal to $\binom s{k-1}\binom s{m-1}\,
_3F_2(1,1-k,1-m;2+\ell+m,2+k+\ell;-1)$.

\section{Recurrence relations}

Let $R$ be the polynomial ring in countably many variables
$x_1$, $x_2$, $x_3$, $\dots$.
Define a linear operator $A:R\to R$ by the following formula:
$$
A(x_{i_1}x_{i_2}\dots x_{i_k}f)=
(x_{i_1}+x_{i_2})(x_{i_2}+x_{i_3})\dots (x_{i_{k-1}}+x_{i_k})f.
$$
In this formula, $x_{i_1}$, $\dots$, $x_{i_k}$ is any finite subset of
variables, and $f(x_{i_1},x_{i_2},\dots, x_{i_k})$ is any polynomial
in these variables.
The formula displayed above defines a linear action of $A$ on the entire $R$.
Indeed, any polynomial $P\in R$ can be represented as the sum
$P=\sum_{S} P_S$, where $S$ runs through all finite subsets of variables,
and $P_S$ denotes the sum of all terms (monomials together with coefficients)
that involve exactly all variables from $S$ and no other variables.
The polynomial $A(P_S)$ is defined above, and we extend $A$ by linearity.
We also set $A(1)=1$ by definition.

The operator $A$ thus defined reduces the degrees of all nonconstant
polynomials.
Therefore, for any polynomial $P$, there exists a positive integer $N$
such that $A^{N}(P)$ is a constant, which is independent of the
choice of $N$ provided that $N$ is sufficiently large.
We let $A^{\infty}(P)$ denote this constant.

\begin{prop}
\label{p:Ainf}
  We have
  $$
  V(1^{i_1}\dots k^{i_k})=A^\infty(x_1^{i_1}\dots x_k^{i_k}).
  $$
\end{prop}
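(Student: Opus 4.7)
The plan is to prove Proposition \ref{p:Ainf} by induction on $n = i_1+\cdots+i_k$, showing that the recurrence for $V(1^{i_1}\cdots k^{i_k})$ arising from a geometric decomposition of $GZ(\lambda)$ coincides exactly with the action of $A$ on $x_1^{i_1}\cdots x_k^{i_k}$.

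The main geometric input is that $GZ(\lambda)$ fibers over the ``first-row polytope'' $P_1 = \prod_{\ell=1}^{n-1}[\lambda_\ell,\lambda_{\ell+1}]$: any point of $GZ(\lambda)$ is specified by its first-row values $(u_{1,1},\dots,u_{1,n-1})\in P_1$ together with a point of the smaller polytope $GZ(u_{1,1},\dots,u_{1,n-1})$ (the fiber). Minimizing a generic linear functional first along the fiber (which produces a concave piecewise-linear function of the first-row coordinates) and then over $P_1$ shows that every vertex of $GZ(\lambda)$ is a pair (vertex of $P_1$, vertex of the corresponding fiber). For $\lambda = 1^{i_1}\cdots k^{i_k}$, the intervals $[\lambda_\ell,\lambda_{\ell+1}]$ are trivial inside each block of equal values and non-trivial precisely at block boundaries, so vertices of $P_1$ are indexed by a binary choice $\delta$ at each such boundary, and the fiber over the vertex corresponding to $\delta$ is $GZ(\mu_\delta)$ for an explicit non-decreasing sequence $\mu_\delta$ with values in $\{1,\dots,k\}$ whose multiplicities are linear functions of $(i_1,\dots,i_k)$ and the entries of $\delta$.

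The matching step is then a direct calculation: expanding
$$
A(x_1^{i_1}\cdots x_k^{i_k}) = x_1^{i_1-1}\cdots x_k^{i_k-1}(x_1+x_2)(x_2+x_3)\cdots(x_{k-1}+x_k),
$$
each monomial in the expansion has exponent vector equal to the multiplicity vector of exactly one $\mu_\delta$, so $A(x_1^{i_1}\cdots x_k^{i_k}) = \sum_\delta x^{\mu_\delta}$. Combining this with the vertex decomposition $V(\lambda) = \sum_\delta V(\mu_\delta)$, the inductive hypothesis $V(\mu_\delta) = A^\infty(x^{\mu_\delta})$, and the identity $A^\infty = A^\infty\circ A$ immediately yields $V(\lambda) = A^\infty(x_1^{i_1}\cdots x_k^{i_k})$. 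The base case is a constant partition $j^n$, where $GZ$ is a point, $V=1$, and $A$ repeatedly strips copies of $x_j$ to give $A^\infty(x_j^n)=1$. The main obstacle will be the bookkeeping when some $i_j = 0$: a missing value merges two adjacent blocks into a single longer interval $[j',j'']$ with $j''>j'+1$, so the vertex-of-$P_1$ binary choice picks $u_{1,\ell}\in\{j',j''\}$ rather than consecutive values. One must verify that the definition of $A$ via the variable-set $S$ — whose factors $(x_{j_m}+x_{j_{m+1}})$ use only the variables actually present in $P_S$ — correctly reproduces these merged-boundary choices, so that the identity $A(\cdot)=\sum_\delta x^{\mu_\delta}$ continues to hold throughout the induction.
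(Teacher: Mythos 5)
Your proposal follows essentially the same route as the paper: project onto the first-row coordinates at the block boundaries (the paper's cube $C$, your $P_1$ with the degenerate factors discarded), identify the $2^{k-1}$ vertices of that cube with the monomials of $(x_1+x_2)\cdots(x_{k-1}+x_k)$, observe that the fiber over the vertex labelled by $x^\alpha$ is $GZ(1^{i_1-1+\alpha_1}\cdots k^{i_k-1+\alpha_k})$, and conclude by induction using $A^\infty=A^\infty\circ A$. Your treatment of the case $i_j=0$ (reduce to the partition with that value deleted, matching the variable-set--dependent definition of $A$) is consistent with how the paper implicitly handles it.

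One step deserves a correction, though. You justify ``every vertex of $GZ(\lambda)$ projects to a vertex of $P_1$'' by saying that minimizing a generic linear functional along the fibers produces a \emph{concave} piecewise-linear function of the base point. In fact the fiberwise minimum of a linear functional over a convex body is a \emph{convex} function of the base point (its epigraph is the projection of a convex set), and minimizing a convex function over a cube can land in the interior; indeed, for a general polytope projection, vertices need not project to vertices (rotate a square $45^\circ$ and project to a line). So this mechanism does not prove the claim. The claim is nevertheless true for Gelfand--Zetlin polytopes, but for a structural reason: at a vertex, every entry of the table must equal one of the $\lambda_i$ (otherwise the connected component, under tight inequalities, of entries sharing a value not among the $\lambda_i$ could be perturbed by $\pm\eps$ inside the polytope), and the only $\lambda$-values available to a block-boundary entry $u_{1,\ell}$ are $\lambda_\ell$ and $\lambda_{\ell+1}$. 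The paper simply asserts this projection property without proof, so you are not worse off than the authors, but the argument you supply in its place should be replaced by one along these lines.
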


\begin{proof}
We will argue by induction on the degree $i_1+\dots+i_k$, equivalently,
on the dimension of the Gelfand--Zetlin polytope $GZ(1^{i_1}\dots k^{i_k})$.
Let $\pi$ be the linear projection of $GZ(1^{i_1}\dots k^{i_k})$ to the cube $C$
given in coordinates $(u_1,\dots,u_{k-1})$ by the inequalities
$$
1\le u_1\le 2\le u_2\le\dots\le k-1\le u_{k-1}\le k.\eqno{(C)}
$$
Namely, we set $u_1=u_{1,i_1}$, $u_2=u_{1,i_1+i_2}$, $\dots$,
$u_{k-1}=u_{1,i_1+\dots+i_{k-1}}$.
Observe that all vertices of $GZ(p)$ project to vertices of the cube $C$.
Thus it suffices to describe the fibers of the projection $\pi$
over the vertices of the cube $C$.

It will be convenient to label the vertices of the cube $C$
by the monomials in the expansion of the polynomial
$A(x_1\dots x_k)$.
Namely, to fix a vertex of $C$, one needs to specify, for
every $j$ between 1 and $k-1$, which of the two inequalities
$j\le u_j$ or $u_j\le j+1$ turns to an equality.
Similarly, to fix a monomial in the polynomial $A(x_1\dots x_k)$,
one needs to specify, for every $j$ between 1 and $k-1$, which
term is taken from the factor $(x_j+x_{j+1})$, the term $x_j$ or
the term $x_{j+1}$.
This description makes the correspondence clear.

Let $v$ be the vertex of the cube $C$ corresponding to a
monomial $x_1^{\alpha_1}\dots x_k^{\alpha_k}$.
It is not hard to see that the polytope $\pi^{-1}(v)$ is
combinatorially equivalent to
$$
GZ(1^{i_1-1+\alpha_1}\dots k^{i_k-1+\alpha_k}).
$$

Suppose that
$$
A(x_1\dots x_k)=\sum_{\alpha_1,\dots,\alpha_k}
c_{\alpha_1\dots\alpha_k} x_1^{\alpha_1}\dots x_k^{\alpha_k}.
$$
Then we have
$$
V(1^{i_1}\dots k^{i_k})=\sum_{\alpha_1,\dots,\alpha_k}
c_{\alpha_1\dots\alpha_k} V(1^{i_1-1+\alpha_1}\dots k^{i_k-1+\alpha_k}).
$$
Since for any $k$-tuple of indices $\alpha_1$, $\dots$, $\alpha_k$,
for which the corresponding coefficient $c_{\alpha_1,\dots,\alpha_k}$
is nonzero, the Gelfand--Zetlin polytope
$GZ(1^{i_1-1+\alpha_1}\dots k^{i_k-1+\alpha_k})$ has smaller
dimension than $GZ(1^{i_1}\dots k^{i_k})$, we can assume by induction
that
$$
V(1^{i_1-1+\alpha_1}\dots k^{i_k-1+\alpha_k})=
A^\infty(x_1^{i_1-1+\alpha_1}\dots x_k^{i_k-1+\alpha_k}).
$$
Hence we have
$$
V(1^{i_1}\dots k^{i_k})=\sum_{\alpha_1,\dots,\alpha_k}
c_{\alpha_1\dots\alpha_k} A^\infty(x_1^{i_1-1+\alpha_1}\dots x_k^{i_k-1+\alpha_k})=
A^\infty(A(x_1^{i_1}\dots x_k^{i_k})).
$$
The desired statement follows.
\end{proof}

\section{Equations on generating functions $E_k$ and $G_k$}
In this section, we deduce equations on the generating functions
$E_k$ and $G_k$.
In particular, we prove Theorems \ref{t:pde} and \ref{t:dde}.

For a multi-index $\alpha=(\alpha_1,\dots,\alpha_k)$, we let
$z^\alpha$ denote the monomial $z_1^{\alpha_1}\dots z_k^{\alpha_k}$,
and $\alpha!$ denote the product $\alpha_1!\dots \alpha_k!$.
The partial derivation with respect to $z_\ell$ will be written as $\d_\ell$.
The power $\d^\alpha$ will mean $\d_1^{\alpha_1}\dots \d_k^{\alpha_k}$.
We will write $I_\ell$ for the operator of integration with
respect to the variable $z_\ell$.
This operator acts on the power series $\sum_{n=0}^\infty a_n z_\ell^n$,
where $a_n$ are power series in the other variables, as follows:
$$
I_\ell\left(\sum_{n=0}^\infty a_n z_\ell^n\right)=\sum_{n=0}^\infty
a_n\frac {z^{\ell+1}}{\ell+1}.
$$
We will use the expansion
$$
(x_1+x_2)\dots (x_{k-1}+x_k)=\sum_\alpha c_\alpha x^\alpha,
$$
in which the coefficients $c_\alpha$ can be computed explicitly.
Let $E^*_k$ be the sum of all terms in $E_k$ divisible by $z_1\dots z_k$.
Then we have ($i$, $j$, $\alpha$ being multi-indices of dimension $k$)
$$
E^*_k=\sum_{i>0} A^\infty(x^i)\frac {z^i}{i!}=
\sum_{i>0}\sum_\alpha c_\alpha A^\infty(x^{i-1+\alpha})\frac{z^i}{i!}=
$$
$$
=\sum_\alpha c_\alpha\d^{\alpha}I_1\dots I_k \sum_{i>0} A^\infty(x^{i-1+\alpha})
\frac{z^{i-1+\alpha}}{(i-1+\alpha)!}=
\sum_\alpha c_\alpha\d^{\alpha}I_1\dots I_k \sum_{j\ge\alpha} A^\infty(x^j)
\frac{z^j}{j!}.
$$
Apply the differential operator $\d_1\dots\d_k$ to both sides of this
equation.
Note that $\d_1\dots\d_k(E^*_k)=\d_1\dots\d_k(E_k)$.
Thus we have
$$
\d_1\dots\d_k(E_k)=\sum_\alpha c_\alpha\d^{\alpha}\sum_{j\ge\alpha} A^\infty(x^j)
\frac{z^j}{j!}.
$$
Observe also that, since $\alpha\ge 0$ whenever $c_\alpha\ne 0$, we have
$$
\d^{\alpha}\sum_{j\ge\alpha} A^\infty(x^j)
\frac{z^j}{j!}=\d^\alpha E_k.
$$
This implies Theorem \ref{t:pde}.

\medskip

\noindent{\sc Example: $k=1$ and $k=2$.}
In the case $k=1$, we have $E_1=e^{z_1}$.
Consider now the case $k=2$.
Set $E=E_2$, $x=z_1$ and $y=z_2$.
By Theorem \ref{t:pde}, the function $E$ satisfies the following partial
differential equation:
$$
E_{xy}=E_x+E_y.
$$
This equation can be simplified by setting $E=e^{x+y}u$,
then the function $u$ satisfies the equation
$$
u_{xy}=u.
$$
and the boundary value conditions $u(x,0)=u(0,y)=1$.
We can now look for solutions $u$ that have the form $v(xy)$, where
$v$ is some smooth function.
This function must satisfy the initial condition $v(0)=1$
and the ordinary differential equation
$$
tv''(t)+v'(t)-v(t)=0.
$$
It is known that the only analytic solution of this initial
value problem is $I_0(2\sqrt t)$, where $I_0$ is the modified Bessel function
of the first kind.
Thus $I_0(2\sqrt{xy})$ is a partial solution of the boundary value problem
$u_{xy}=u$, $u(x,0)=u(0,y)=1$.
The solution of this boundary value problem is unique (note that the
boundary values are defined on characteristic curves!).
Therefore, we must conclude that $E(x,y)=e^{x+y}I_0(2\sqrt{xy})$.

The proof of Theorem \ref{t:dde} is very similar to the proof of
Theorem \ref{t:pde}.
Let $G_k^*$ be the sum of all terms in $G_k$ that are divisible by
$y_1\dots y_k$, i.e.
$$
G^*_k=\sum_{i>0} V(1^{i_1}\dots k^{i_k}) y^i.
$$
Then, similarly to a formula obtained for $E^*_k$, we have
$$
G^*_k=\sum_\alpha c_\alpha y_1^{1-\alpha_1}\dots y_k^{1-\alpha_k}
\sum_{j\ge\alpha} A^\infty(x^j)y^j.
$$
Applying the operator $\Delta_1\dots\Delta_k$ to both sides of this
equation, we obtain Theorem \ref{t:dde}.
Similarly to the proof of Theorem \ref{t:pde}, we need to use
that
$$
\Delta_1\dots\Delta_k(G^*_k)=\Delta_1\dots\Delta_k(G_k)
$$
and that
$$
\Delta_1^{\alpha_1}\dots\Delta_k^{\alpha_k}(G_k)=
y_1^{-\alpha_1}\dots y_k^{-\alpha_k}\sum_{j\ge\alpha} A^\infty(x^j)y^j.
$$

We will now discuss several examples.

\medskip

\noindent{\sc Example: $k=1$ and $k=2$.}
For $k=1$, we have the following equation: $\Delta_1 G_1=G_1$, i.e.
$G_1(y_1)-G_1(0)=y_1G_1(y_1)$.
Knowing that $G_1(0)=1$, this gives
$$
G_1(y_1)=1+y_1+y_1^2+\dots=\frac 1{1-y_1}.
$$
Suppose that $k=2$.
Set $G=G_2$, $x=y_1$, $y=y_2$.
The function $G$ satisfies the following equation
$$
\Delta_x\Delta_y G=\Delta_x G+\Delta_y G.
$$
Note that $G(x,0)=G_1(x)$ and $G(0,y)=G_1(y)$.
Therefore, the right-hand side can be rewritten as
$$
\frac{G-\frac 1{1-y}}x+\frac{G-\frac 1{1-x}}y.
$$
The left-hand side is
$$
\Delta_x\left(\frac{G-\frac 1{1-x}}{y}\right)=
\frac 1x\left(\frac{G-\frac 1{1-x}}{y}-\frac{\frac 1{1-y}-1}y\right).
$$
Solving the linear equation on $G$ thus obtained, we conclude that
$$
G=\frac 1{1-x-y}.
$$

\medskip

\noindent{\sc Example: $k=3$.}
We set $G=G_3$, $x=y_1$, $y=y_2$ and $z=y_3$.
The function $G$ satisfies the following equation:
$\Delta_x\Delta_y\Delta_zG=(\Delta_x+\Delta_y)(\Delta_y+\Delta_z)G$.
This equation can be rewritten as follows:
$$
\Delta_y^2G=\frac{G(1-x-y-z)-1}{xyz}.
$$
Suppose that $G=G(x,0,z)+A(x,z)y+\dots$, where dots denote the terms divisible
by $y^2$.
Then we have
$$
\Delta_y^2G=G-G(x,0,z)-A(x,z)y=G-\frac 1{1-x-z}-A(x,z)y.
$$
Substituting this into the equation, we can solve the equation for $G$
in terms of $A$:
$$
G=\frac{-x z+y (1-x-z) (1-A(x,z) x z)}{(1-x-z) \left(y-(x+y)(y+z)\right)}.
$$
Since the power series $1-x-z$ is invertible, it follows that $G$ has the form
$$
\frac{a+by}{y-(x+y)(y+z)},
$$
where $a$ and $b$ are some power series in $x$ and $z$.
Let $\lambda$ and $\mu$ be the two solutions of the equation
$y=(x+y)(y+z)$, namely,
$$
\lambda,\mu=\frac{1-x-z\pm\sqrt{1-2(x+z)+(x-z)^2}}2.
$$
The signs are chosen so that, at the point $x=z=0$, we have $\lambda=1$
and $\mu=0$.
Then
$$
\frac 1{y-(x+y)(y+z))}=\frac{c}{y-\lambda}+\frac d{y-\mu},
$$
where $c$ and $d$ are some power series in $x$ and $z$.
Note that, since $(y-\lambda)^{-1}$ makes sense as a power series,
$c(a+by)/(y-\lambda)$ can be represented as a power series in $x$, $y$ and $z$.
Thus the function $d(a+by)/(y-\mu)$ must also be representable as
a power series in $x$, $y$ and $z$.
However, this is only possible if the numerator is a multiple of the
denominator, i.e. $(a+by)=e(y-\mu)$, where the
coefficient $e$ is a power series of $x$ and $z$.
It follows that $G$ is equal to $e(y-\lambda)^{-1}$.
The coefficient $e$ can be found from the condition $G(x,0,z)=\frac 1{1-x-z}$:
$$
G=\frac 1{1-x-z}\frac{\lambda}{\lambda-y}=
$$
$$
=\frac{2xz-y(1-x-z)-y\sqrt{1-2(x+z)+(x-z)^2}}
{2(1-x-z)((x+y)(y+z)-y)}.
$$

\section{Proof of Theorem \ref{t:coeff}}
\label{s:gz3}
In this section, we will prove Theorem \ref{t:coeff}, which expresses
the numbers $V_{k,\ell,m}$ as coefficients of certain Laurent polynomials.
The numbers $V_{k,\ell,m}$ satisfy the following recurrence relation:
$$
V_{k,\ell,m}=V_{k-1,\ell,m}+V_{k,\ell-1,m}+V_{k,\ell,m-1}+V_{k-1,\ell+1,m-1}
$$
provided that $k$, $\ell$, $m>0$,
and the following initial conditions:
$$
V_{0,\ell,m}=V_{\ell,m},\quad V_{k,0,m}=V_{k,m},\quad V_{k,\ell,0}=V_{k,\ell}.
$$

Set $V_{k,m}^s=V_{k,s-k-m,m}$.
Then we can write the following recurrence relations on the numbers $V_{k,m}^s$:
$$
V_{k,m}^s=V_{k-1,m}^{s-1}+V_{k,m-1}^{s-1}+V_{k-1,m-1}^{s-1}+V_{k,m}^{s-1}
$$
provided that $k\ge 1$, $m\ge 1$, $k+m\le s-1$, and
$$
V_{k,m}^s=V_{k-1,m}^{s-1}+V_{k,m-1}^{s-1}+V_{k-1,m-1}^{s-1}
$$
provided that $k+m=s$.

\begin{figure}
  \includegraphics[height=1cm]{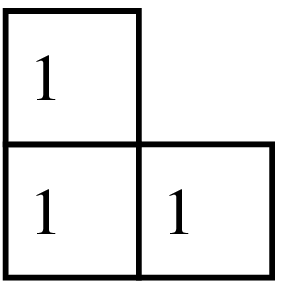}
  \includegraphics[height=1.5cm]{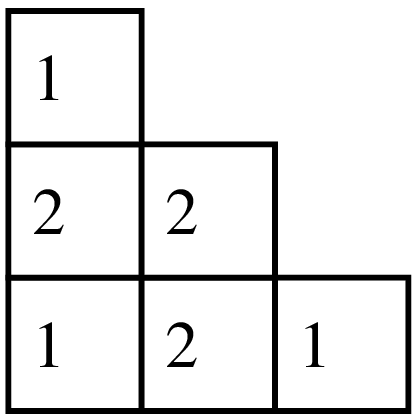}
  \includegraphics[height=2cm]{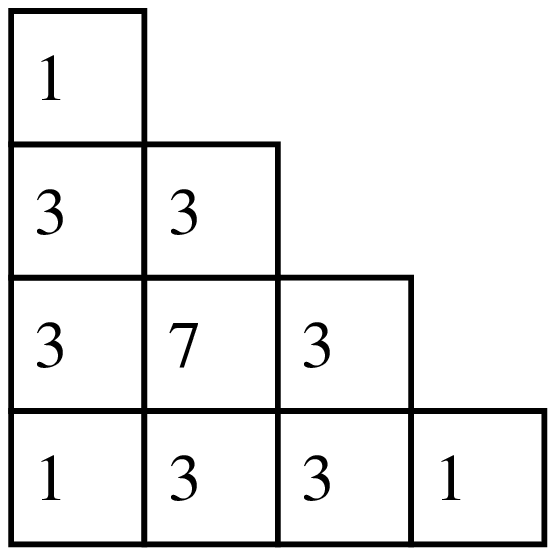}
  \includegraphics[height=2.5cm]{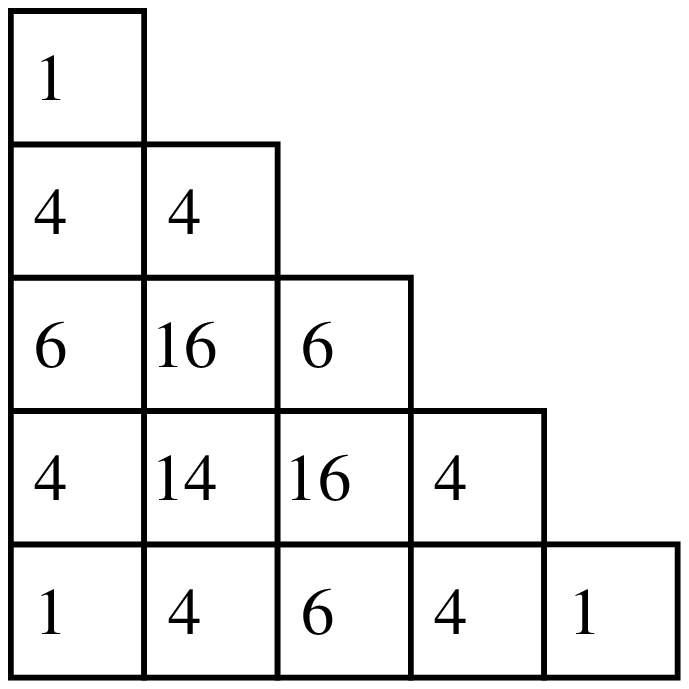}
  \includegraphics[height=3cm]{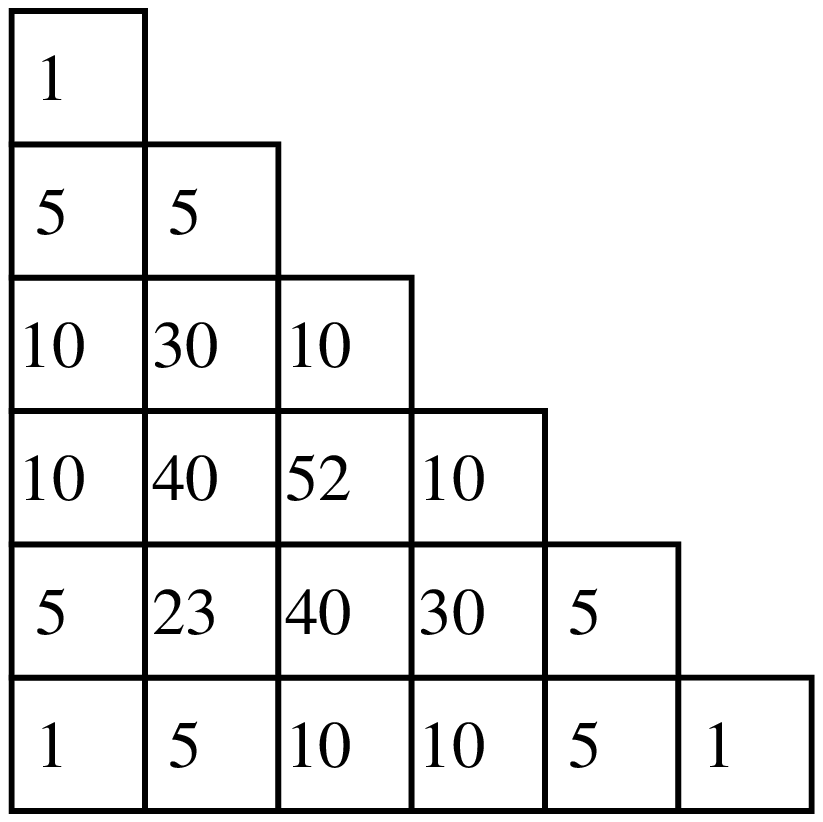}
  \caption{Triangular tables $T^s$ containing the numbers $V^s_{k,m}$.
Southwest corners of these tables are located at $(0,0)$.}
  \label{pic:T}
\end{figure}

For a fixed $s$, we can arrange the numbers $V^s_{k,m}$ into a triangular
table $T^s$ of size $s$ as shown on Figure \ref{pic:T}.
Namely, the number $V^s_{k,m}$ is placed into the cell, whose southwest (lower left)
corner is at position $(k,m)$.
The next table $T^{s+1}$ can be obtained from the table $T^s$ as follows.
First, we add to every element of $T^s$ its south, west and southwest neighbors.
Next, we add a line of cells, whose positions $(k,m)$ satisfy the equality $k+m=s$.
In every cell of this line, we put the sum of the south and west neighbors.
Note that, by construction, the boundary of every table $T^s$ consists of
binomial coefficients.

Consider the generating function $G=G_3$ for the numbers $V_{k,\ell,m}$.
The splitting of $G$ into homogeneous components can be
obtained by expanding the function $G(xy,y,zy)$ into powers of $y$.
We set
$$
G(xy,y,zy)=\sum_{s=0}^\infty g_s(x,z)y^s
$$
Then we have
$$
g_s(x,z)=\sum_{k=0}^s\sum_{m=0}^{s-k} V^s_{k,m} x^kz^m.
$$
Thus the coefficients of the polynomial $g_s$ are precisely elements of
the table $T^s$.
The recurrence relations on the numbers $V^s_{k,m}$ displayed above
imply the following property of the generating functions $g_s$:

\begin{prop}
  The polynomials $g_s$ satisfy the following recurrence relations:
  $$
  g_{s+1}=(1+x+z)g_s+\tau_{\le s}(xz g_{s}),
  $$
  where the truncation operator $\tau_{\le s}$ acts on a polynomial
  by removing all terms, whose degrees exceed $s$.
\end{prop}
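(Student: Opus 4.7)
The plan is to verify the identity by matching the coefficient of $x^k z^m$ on both sides for each $(k,m)$ with $k+m \le s+1$. First I would translate the four-term recurrence on $V_{k,\ell,m}$ into one on $V^s_{k,m}$: in the strict interior $k,m \ge 1$, $k+m \le s$ it becomes
\[
V^{s+1}_{k,m} = V^s_{k,m} + V^s_{k-1,m} + V^s_{k,m-1} + V^s_{k-1,m-1}
\]
(``self plus south, west, and southwest''), while on the new hypotenuse $k+m = s+1$ the original recurrence degenerates ($\ell=0$), and the binomial boundary values $V^s_{\cdot,\cdot}$ yield the two-term Pascal rule
\[
V^{s+1}_{k,m} = V^s_{k-1,m} + V^s_{k,m-1};
\]
the coordinate axes $k=0$ or $m=0$ are likewise covered by the usual Pascal identity for binomials.

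Next I would expand
\[
(1+x+z)g_s = \sum_{k,m} V^s_{k,m}\bigl(x^k z^m + x^{k+1} z^m + x^k z^{m+1}\bigr),
\]
so that the coefficient at $x^k z^m$ reads $V^s_{k,m} + V^s_{k-1,m} + V^s_{k,m-1}$, with the convention that $V^s_{i,j}$ vanishes outside the triangle $0 \le i$, $0 \le j$, $i+j \le s$. The coefficient of $x^k z^m$ in $xz\,g_s$ is $V^s_{k-1,m-1}$ for $k,m \ge 1$, and the truncation $\tau_{\le s}$ retains this contribution precisely when $k+m \le s$. Case-by-case matching then suffices: in the strict interior all four terms survive and reproduce the four-term rule; on the new hypotenuse both $V^s_{k,m}$ and the truncated $V^s_{k-1,m-1}$ vanish, leaving exactly the two Pascal summands; along the axes the reduction is immediate.

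The main obstacle is the hypotenuse case, where the specific cutoff in $\tau_{\le s}$ is essential: replacing it by $\tau_{\le s+1}$ (or dropping the truncation altogether) would inject a spurious $V^s_{k-1,m-1}$ contribution at $k+m = s+1$, precisely where the two-term boundary rule forbids it. The key insight is that the hypotenuse of $T^{s+1}$ obeys a strictly weaker recurrence than the interior, and the degree cutoff $\tau_{\le s}$ is exactly the device that encodes this dichotomy inside a single polynomial identity.
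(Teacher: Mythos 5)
Your proof is correct and follows essentially the same route as the paper, which simply records the recurrences on $V^s_{k,m}$ and reads off the coefficient identity, with the truncation $\tau_{\le s}$ encoding exactly the disappearance of the southwest term on the new hypotenuse. One remark in your favour: your two-term Pascal rule on $k+m=s+1$ is the correct one (it agrees with the paper's verbal description ``sum of the south and west neighbors'' and with direct computation, e.g. $V^2_{1,1}=V(1^13^1)=2$ while south $+$ west $+$ southwest would give $3$), whereas the paper's displayed recurrence for the case $k+m=s$ erroneously carries an extra term $V^{s-1}_{k-1,m-1}$.
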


Consider the polynomials
$$
h_s(x,z)=g_s(x,z)-(xz)^s g_s(z^{-1},x^{-1}).
$$
Geometrically, these polynomials can be described as follows.
Let $\tilde T^{s}$ denote the table, into which we put all coefficients of the
polynomial $h_s$, see Figure \ref{pic:Ttilde}.
The lower left triangle of size $s-1$ is the same in the tables $T^s$
and $\tilde T^{s}$.
The table $\tilde T^{s}$ is skew-symmetric with respect to the main diagonal.
These two properties give a unique characterization of the tables $\tilde T^{s}$.

\begin{figure}
  \includegraphics[height=1cm]{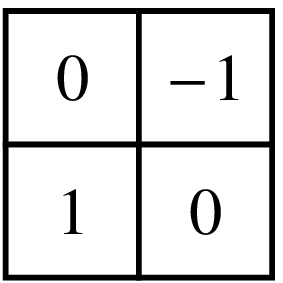}
  \includegraphics[height=1.5cm]{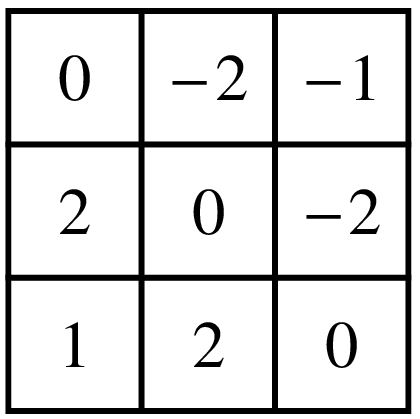}
  \includegraphics[height=2cm]{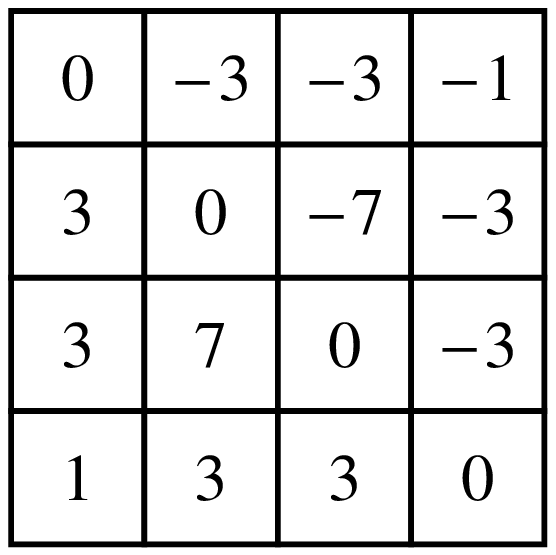}
  \includegraphics[height=2.5cm]{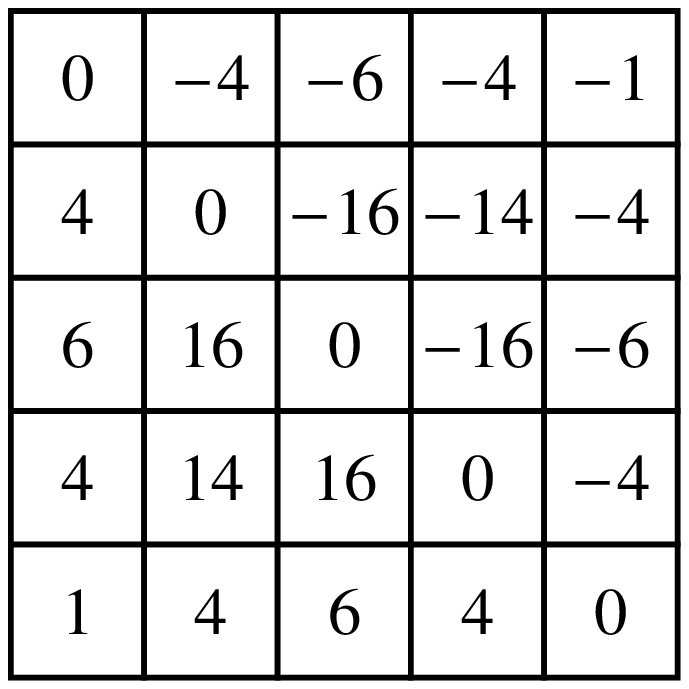}
  \includegraphics[height=3cm]{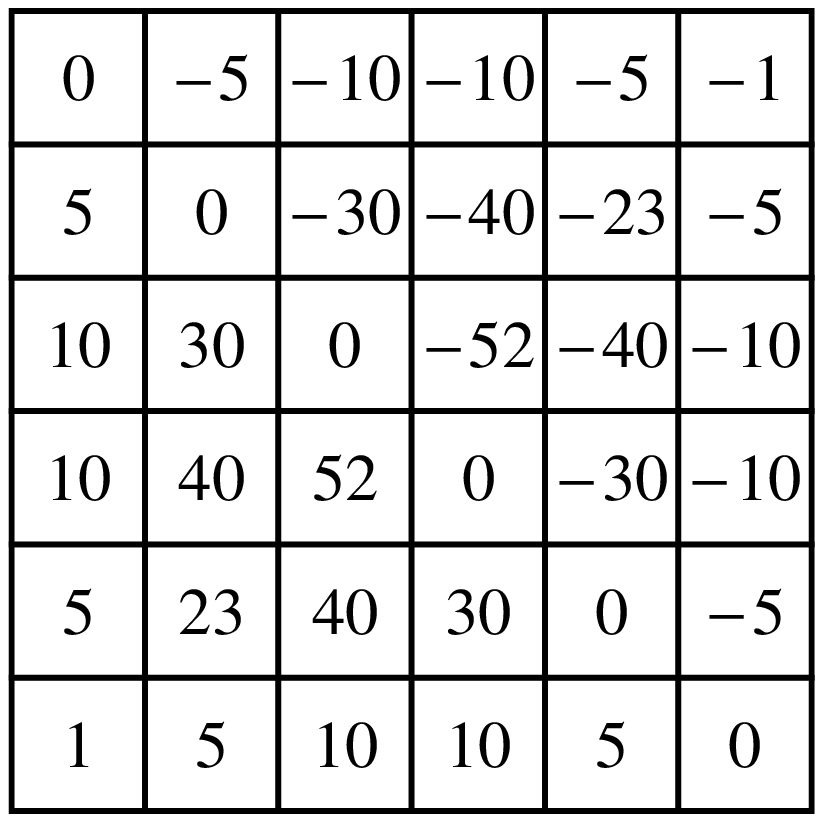}
  \caption{The skew-symmetric tables $\tilde T^{s}$.}
  \label{pic:Ttilde}
\end{figure}

\begin{figure}
  \includegraphics[height=2cm]{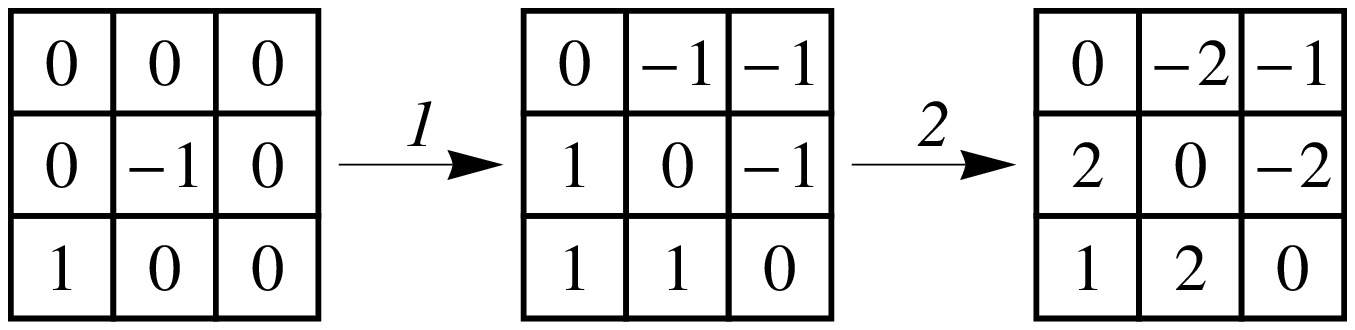}\\
  \includegraphics[height=2.5cm]{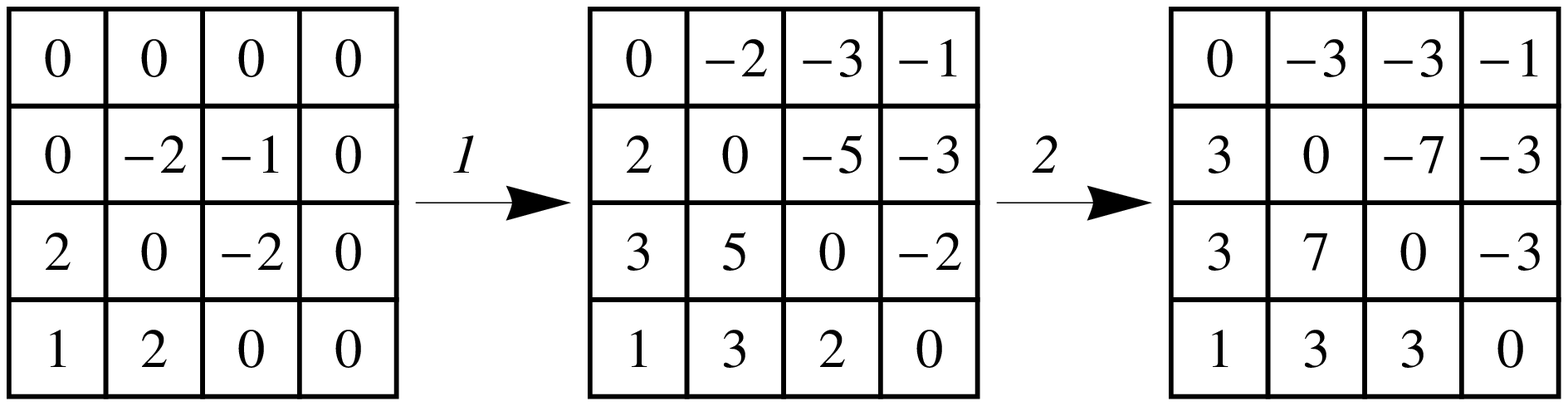}
  \caption{The rules of generating the tables $\tilde T^{s}$.}
  \label{pic:rule}
\end{figure}

The rules, by which the tables $\tilde T^s$ are formed, are the following
(see Figure \ref{pic:rule}).
The first table $\tilde T^1$ is by definition the left-most table shown on Figure \ref{pic:Ttilde}.
The next table $\tilde T^{s+1}$ is obtained inductively from the preceding table $\tilde T^s$
in two steps.
In the first step, we add to every element of $\tilde T^s$ its immediate west, south and southwest
neighbors.
In the second step, we modify elements in two diagonals of the table, namely, the elements,
whose positions (mesured by southwest corners) $(k,m)$ satisfy the equality $k+m=s$ or $k+m=s+2$.
To the cell at position $(k,m)$, where $k+m=s$, we add the binomial coefficient $\binom{k+m}{m}$.
From the cell at position $(k+1,m+1)$, we subtract this binomial coefficient.

We have the following recurrence relation on the polynomials $h_s$:
$$
h_{s+1}=h_s(1+x)(1+z)+(1-xz)(x+z)^s,
$$
which does not contain truncation operators.
Therefore, the generating function $H=\sum_{s=0}^\infty h_s y^s$
satisfies the following linear equation:
$$
H=1+y((1+x)(1+z)H+(1-xz)(1-y(x+z))^{-1}).
$$
Solving this equation, we find that
$$
H=\frac{y(1-xz)}{(1-y(x+z))(1-y(1+x)(1+z))}.
$$

Knowing the generating function $H$, we can now obtain an explicit formula
for the polynomials $h_s$, namely,
$$
h_s(x,z)=\frac{1-xz}{1+x z}\left((1+x)^s(1+z)^s-(x+z)^s\right).
$$
Theorem \ref{t:coeff} is thus proved.

\subsection*{Open problems}
\begin{enumerate}
 \item Prove or disprove: the generating function $G_4$ is algebraic.
Note that $G_1$ and $G_2$ are rational, and $G_3$ is algebraic.
\item Deduce differential or difference equations on the generating functions
for the $f$-vectors and for the modified $h$-vectors of Gelfand--Zetlin polytopes.
\end{enumerate}

\end{document}